\documentclass[11pt]{amsart}

\usepackage{amscd}
\usepackage{amsmath, amssymb}
\usepackage{amsfonts}
\newcommand{\de}{\partial}

\newcommand{\ov}[1]{\overline{#1}}

\newcommand{\vol}{\mathrm{Vol}}

\newcommand{\ve}{\varepsilon}



\renewcommand{\leq}{\leqslant}
\renewcommand{\geq}{\geqslant}

\begin{document}
\newtheorem{claim}{Claim}
\newtheorem{theorem}{Theorem}[section]
\newtheorem{conjecture}[theorem]{Conjecture}
\newtheorem{lemma}[theorem]{Lemma}
\newtheorem{corollary}[theorem]{Corollary}
\newtheorem{proposition}[theorem]{Proposition}
\newtheorem{question}[theorem]{question}
\newtheorem{conj}[theorem]{Conjecture}
\newtheorem{defn}[theorem]{Definition}
\theoremstyle{definition}
\newtheorem{rmk}[theorem]{Remark}

\numberwithin{equation}{section}

\newenvironment{example}[1][Example]{\addtocounter{remark}{1} \begin{trivlist}
\item[\hskip
\labelsep {\bfseries #1  \thesection.\theremark}]}{\end{trivlist}}
\title{Regularity of the volume function}

\author{Junyu Cao}
\author{Valentino Tosatti}
\address{Courant Institute of Mathematical Sciences, New York University, 251 Mercer St, New York, NY 10012}
\email{junyu.cao@nyu.edu}
\email{tosatti@cims.nyu.edu}

\begin{abstract}
We prove the optimal $C^{1,1}$ regularity of the volume function on the big cone of a projective manifold, and investigate its regularity when restricted to segments moving in ample directions.
\end{abstract}

\maketitle

\section{Introduction}
\subsection{The volume function}
Given a Cartier divisor $D$ on a smooth complex projective variety $X^n$, its volume is defined as
\begin{equation}
\vol(D):=\limsup_{m\to+\infty}\frac{n!\cdot h^0(X,mD)}{m^n}\in\mathbb{R}_{\geq 0}.
\end{equation}
The volume of a divisor is a basic and yet delicate object, which has been the focus of much recent work (see e.g. \cite{Bo,BFJ,ELMNP3,Laz,LM,WN}) and has been pivotal in several recent developments in birational geometry, such as \cite{HM,HMX,Tak}.

In \cite[\S 2.2.C]{Laz}, Lazarsfeld established the basic properties of the volume, including the fact that $\vol(D)$ only depends on the numerical class of $D$. We can thus define a volume function
\begin{equation}
\vol:N^1(X)\to\mathbb{R}_{\geq 0},
\end{equation}
on the N\'eron-Severi group $N^1(X)$. This function is homogeneous of degree $n$, thus it can be naturally extended to $N^1(X)\otimes_{\mathbb{Z}}\mathbb{Q}$, and this extension is locally Lipschitz by \cite[Theorem 2.2.44]{Laz}, hence it can be further extended to a locally Lipschitz function
\begin{equation}
\vol:N^1(X,\mathbb{R})\to\mathbb{R}_{\geq 0},
\end{equation}
defined on the real N\'eron-Severi group $N^1(X,\mathbb{R})=N^1(X)\otimes_{\mathbb{Z}}\mathbb{R}$. In \cite{Bo}, Boucksom gave a transcendental formula for the volume of a divisor, which allowed him to extend the definition of the volume function to any compact K\"ahler manifold $X$ (or even just a compact complex manifold bimeromorphic to a K\"ahler one). This way, he obtained a continuous function
\begin{equation}
\vol:H^{1,1}(X,\mathbb{R})\to\mathbb{R}_{\geq 0},
\end{equation}
where of course $N^1(X,\mathbb{R})\subset H^{1,1}(X,\mathbb{R})$. When $\alpha\in H^{1,1}(X,\mathbb{R})$ is nef, we have the simple formula
$$\vol(\alpha)=\int_X\alpha^n.$$

\subsection{Optimal regularity}
In this note we investigate the question of the optimal regularity of the volume function. Let $X^n$ be a compact K\"ahler manifold and let $\mathcal{B}_X\subset H^{1,1}(X,\mathbb{R})$ be the open cone of big $(1,1)$-classes, which are precisely the $(1,1)$-classes $\alpha$ with $\vol(\alpha)>0$. On the big cone the function $\vol^{\frac{1}{n}}$ is concave, hence $\vol$ is locally Lipschitz continuous.

Much more is true: in \cite{BFJ,LM}, Boucksom-Favre-Jonsson and Lazarsfeld-Musta\c{t}\u{a} independently proved that when $X$ is projective $\vol$ is $C^1$ differentiable inside $\mathcal{B}_X\cap N^1(X,\mathbb{R})$, and this result was extended to all of $\mathcal{B}_X$ by Witt Nystr\"om \cite{WN}, and even to the case of $X$ K\"ahler provided the BDPP Conjecture holds \cite[Conj. 2.3, Conj. 10.1(ii)]{BDPP} (the main result of \cite{WN} is that the BDPP Conjecture holds when $X$ is projective). These works also gave an attractive expression for the gradient of $\vol$: on $X$ projective (or compact K\"ahler satisfying BDPP), given $\alpha\in\mathcal{B}_X$ and $\beta\in H^{1,1}(X,\mathbb{R})$ we have
\begin{equation}\label{diff}
\frac{d}{dt}\bigg|_{t=0}\vol(\alpha+t\beta)=n\beta\cdot\langle\alpha^{n-1}\rangle.\end{equation}
On the other hand, in general $\vol$ cannot be of regularity higher than $C^{1,1}_{\rm loc}(\mathcal{B}_X)$, since when $X$ is the blowup of $\mathbb{P}^2$ at one point, the volume function can be easily computed explicitly \cite[Example 2.2.46]{Laz} and it is of class $C^{1,1}$ but not $C^2$ inside $\mathcal{B}_X$ (its Hessian is a bounded but discontinuous function).

In \cite[Question 1.4.4]{FLT}, it was recently asked what the optimal regularity of $\vol$ is inside $\mathcal{B}_X$, and whether it could be $C^{1,1}$ in general. Our first result answers this question affirmatively:

\begin{theorem}\label{t1}
Let $X$ be a projective manifold (or more generally a compact K\"ahler manifold satisfying the BDPP Conjecture). Then $\vol\in C^{1,1}_{\rm loc}(\mathcal{B}_X)$.
\end{theorem}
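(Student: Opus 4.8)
The plan is to reduce the statement to the local Lipschitz continuity of the gradient of $\vol$, and then to prove the latter through an estimate on the variation of the positive intersection product. Since $\vol$ is already known to be $C^1$ on $\mathcal{B}_X$ with gradient given by \eqref{diff}, we have (after identifying the dual of $H^{1,1}(X,\mathbb{R})$ with $H^{n-1,n-1}(X,\mathbb{R})$) that $\vol\in C^{1,1}_{\rm loc}(\mathcal{B}_X)$ if and only if the map
$$\mathcal{B}_X\ni\alpha\longmapsto\langle\alpha^{n-1}\rangle\in H^{n-1,n-1}(X,\mathbb{R})$$
is locally Lipschitz. Fixing a compact set $K\subset\mathcal{B}_X$, it suffices to bound, uniformly over $K$ and over unit vectors $\beta$, the pure second derivatives $\partial_\beta^2\vol$, i.e.\ the second distributional derivative of $v(t):=\vol(\alpha+t\beta)$, since a $C^1$ function with uniformly bounded pure second directional derivatives is $C^{1,1}$ by polarization. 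I will show $|v''|\le C$, the upper bound being the easy half and the lower bound the substance of the argument.

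The upper bound $v''\le C$ is essentially free from concavity. The function $w:=\vol^{1/n}$ is concave on the convex cone $\mathcal{B}_X$, and on $K$ it is bounded away from $0$ and $+\infty$ and lies in $C^1$ with $w'$ continuous and bounded (because $\vol$ is $C^1$ and positive there). Writing $v=w^n$ and differentiating twice in $t$ gives, in the distributional sense,
$$v''=n(n-1)\,w^{n-2}(w')^2\,dt+n\,w^{n-1}\,w'' ,$$
where $w''$ is a nonpositive measure by concavity. Since the remaining factors are continuous and bounded on $K$, we obtain $v''\le C\,dt$. The genuine content is therefore the lower bound, which is equivalent to saying that the concave function $\vol^{1/n}$ is also semiconvex, i.e.\ has a two-sided bounded Hessian.

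For the lower bound I would represent the positive product by currents with minimal singularities. Choose smooth closed representatives $\theta_\alpha$ depending linearly on the class, and let $u_\alpha$ be the envelope $\sup\{\,\varphi\ \theta_\alpha\text{-psh},\ \varphi\le 0\,\}$, so that $T_\alpha:=\theta_\alpha+dd^c u_\alpha$ is the positive current with minimal singularities in $\alpha$ and $\langle\alpha^{n-1}\rangle$ is represented by the non-pluripolar product $\langle T_\alpha^{n-1}\rangle$. For two nearby classes $\alpha,\alpha'\in K$, write $\eta:=\theta_{\alpha'}-\theta_\alpha$ (so $\|\eta\|_{C^0}\le C\|\alpha-\alpha'\|$) and $\psi:=u_{\alpha'}-u_\alpha$. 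Pairing the difference of positive products against a fixed smooth closed test form $\omega$ and expanding telescopically gives
$$\int_X\omega\wedge\big(T_{\alpha'}^{\,n-1}-T_\alpha^{\,n-1}\big)=\sum_{j=0}^{n-2}\int_X\omega\wedge T_{\alpha'}^{\,j}\wedge\big(\eta+dd^c\psi\big)\wedge T_\alpha^{\,n-2-j}.$$
The terms containing $\eta$ are bounded by $C\|\eta\|_{C^0}\le C\|\alpha-\alpha'\|$, since the mixed products have cohomological mass bounded on $K$. The terms containing $dd^c\psi$ are the delicate ones: formally integrating by parts transfers both derivatives onto the closed form $\omega$, and $dd^c\omega=0$ would make them vanish, so that together with the $L^\infty$-stability estimate $\|\psi\|_{L^\infty}\le C\|\alpha-\alpha'\|$ for envelopes one would obtain the Lipschitz bound on $\langle\alpha^{n-1}\rangle$, hence on $v'$, hence $|v''|\le C$.

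The hard part is precisely that this integration by parts is not valid as stated: the potentials $u_\alpha$ are unbounded along the complement of the ample locus of $\alpha$, and the non-pluripolar products genuinely produce boundary contributions there. Controlling them is the heart of the matter. I would localize to the (Zariski open) common ample locus, where Berman's regularity theorem gives that the envelopes $u_\alpha$ are $C^{1,1}$ with locally bounded complex Hessian, carry out the integration by parts against a cutoff exhausting this locus, and estimate the boundary terms by means of Boucksom's orthogonality relation, which quantifies how little Monge--Amp\`ere mass of $T_\alpha$ escapes to the non-ample locus. Showing that these boundary terms are $O(\|\alpha-\alpha'\|)$—but in general no better—is exactly what yields $C^{1,1}$ regularity and not more, in accordance with the blow-up of $\mathbb{P}^2$ example, where the Hessian is bounded but discontinuous.
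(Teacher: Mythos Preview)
Your reduction is the same as the paper's: once one knows that $\vol$ is $C^1$ with $\nabla\vol|_\alpha$ given by pairing with $\langle\alpha^{n-1}\rangle$, the task is to show that the scalar functions $f(\alpha)=\omega\cdot\langle\alpha^{n-1}\rangle$ are locally Lipschitz on $\mathcal{B}_X$ for each K\"ahler class $\omega$. Your upper bound $v''\le C$ via concavity of $\vol^{1/n}$ is correct and easy. The substantive gap is in the lower bound.

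You propose to estimate $\int_X\omega\wedge(T_{\alpha'}^{n-1}-T_\alpha^{n-1})$ by a telescoping sum and integration by parts, handling the $dd^c\psi$ terms via cutoffs on the common ample locus and invoking orthogonality to control the boundary contributions. But you never actually carry this out: you assert that the boundary terms ``are $O(\|\alpha-\alpha'\|)$'' without justification, and this is precisely the nontrivial point. Non-pluripolar products do not in general satisfy integration by parts, and the mass of $T_\alpha^j\wedge T_{\alpha'}^{n-2-j}$ near the non-ample locus has no a priori reason to be small in terms of $\|\alpha-\alpha'\|$; the orthogonality relation controls how little \emph{Monge--Amp\`ere} mass escapes, not mixed masses against arbitrary test forms. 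Making this rigorous would require at minimum a quantitative Monge--Amp\`ere capacity estimate uniform over the compact $K$, which you have not supplied. As written, the argument is a plausible program, not a proof.

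The paper bypasses all of this by exploiting structural properties of $f$ itself rather than direct current estimates. Two independent arguments are given. The first observes that $f^{1/(n-1)}$ is \emph{concave} on $\mathcal{B}_X$ (proved via K\"ahler Fujita approximation and the Khovanskii--Teissier inequalities for nef classes on a common modification), and concave functions on open convex sets are automatically locally Lipschitz; since $f>0$ on $\mathcal{B}_X$, so is $f$. The second is even more elementary: $f$ is homogeneous of degree $n-1$, non-decreasing in the $\mathcal{B}_X$-directions (basic monotonicity of positive products), and locally bounded; an abstract lemma shows that any such function on an open convex cone is locally Lipschitz. Either route avoids integration by parts, envelopes, boundary terms, and regularity of minimal-singularity potentials entirely. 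The key idea you are missing is that the \emph{one-sided} monotonicity of $\alpha\mapsto\langle\alpha^{n-1}\rangle$, together with homogeneity, is already enough to force a two-sided Lipschitz bound---no hard analysis needed.
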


One can also wonder about the regularity of $\vol$ at points in $\de\mathcal{B}_X$, or equivalently on all of $H^{1,1}(X,\mathbb{R})$ (since $\vol$ vanishes identically outside $\mathcal{B}_X$). In \cite[Theorem 2.2.44]{Laz}, Lazarsfeld showed that when $X$ is projective, $\vol$ is locally Lipschitz on all of $N^1(X,\mathbb{R})$, which is again optimal in general since $\vol$ need not be differentiable at points of $\de\mathcal{B}_X$ (e.g. again in the case of the blowup of $\mathbb{P}^2$ at one point). Our second result extends this to all of $H^{1,1}(X,\mathbb{R})$:

\begin{theorem}\label{t2}
Let $X$ be a projective manifold (or more generally a compact K\"ahler manifold satisfying the BDPP Conjecture). Then $\vol\in \mathrm{Lip}_{\rm loc}(H^{1,1}(X,\mathbb{R}))$.
\end{theorem}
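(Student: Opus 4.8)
The plan is to prove Theorem~\ref{t2} by combining Theorem~\ref{t1} with the already-available local Lipschitz continuity near the boundary. The key point is that the two statements live on different regions of $H^{1,1}(X,\mathbb{R})$: Theorem~\ref{t1} gives us $C^{1,1}_{\rm loc}$ (in particular locally Lipschitz) control on the open big cone $\mathcal{B}_X$, while the difficulty of Theorem~\ref{t2} is entirely concentrated at the boundary $\de\mathcal{B}_X$ and outside, where $\vol$ is known to vanish identically (on $H^{1,1}(X,\mathbb{R})\setminus\mathcal{B}_X$).

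First I would observe that since $\vol^{1/n}$ is concave on the big cone and $\vol$ is continuous on all of $H^{1,1}(X,\mathbb{R})$ (by Boucksom's extension), the only question is to produce, for each class $\alpha_0\in\de\mathcal{B}_X$, a neighborhood $U$ and a constant $C$ with $|\vol(\alpha)-\vol(\alpha')|\le C\norm{\alpha-\alpha'}$ for all $\alpha,\alpha'\in U$. I would fix a K\"ahler class $\omega$ and work with the exhaustion of $\mathcal{B}_X$ by the classes $\alpha+\ve\omega$. The crucial estimate to establish is a bound on the gradient of $\vol$ that does \emph{not} degenerate as one approaches the boundary: using the gradient formula \eqref{diff}, for $\alpha\in\mathcal{B}_X$ and $\beta$ any $(1,1)$-class one has $\frac{d}{dt}|_{t=0}\vol(\alpha+t\beta)=n\,\beta\cdot\langle\alpha^{n-1}\rangle$, so it suffices to bound the positive intersection product $\langle\alpha^{n-1}\rangle$ uniformly in terms of $\omega$ as $\alpha$ ranges over a bounded neighborhood of $\alpha_0$.

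The main obstacle is precisely this uniform bound on the positive product near $\de\mathcal{B}_X$. I would control it by the monotonicity and the comparison properties of positive intersection products: for $\alpha$ in a bounded set there is a fixed $C_0$ with $\alpha\le C_0\,\omega$ (as classes, in the sense that $C_0\omega-\alpha$ is K\"ahler for $C_0$ large), and since $\langle\,\cdot\,\rangle$ is monotone and dominated by the ordinary product against nef classes, we get $|\beta\cdot\langle\alpha^{n-1}\rangle|\le C_0^{n-1}\,|\beta\cdot\omega^{n-1}|\le C\norm{\beta}$, with $C$ uniform on $U$. This yields a uniform Lipschitz bound for $\vol$ restricted to any segment contained in $\mathcal{B}_X\cap U$. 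To pass from segments inside the open cone to arbitrary pairs in $U$ (including boundary points), I would use the continuity of $\vol$ to take limits: given $\alpha,\alpha'\in U$, approximate them by $\alpha+\ve\omega,\alpha'+\ve\omega\in\mathcal{B}_X$, apply the uniform segment bound along the straight path between them, and let $\ve\to0$, invoking continuity of $\vol$ at both endpoints. Finally, for pairs where one or both points lie outside $\overline{\mathcal{B}_X}$ (where $\vol\equiv 0$), the Lipschitz inequality follows directly from continuity together with the interior estimate, so the local Lipschitz constant on $U$ is the maximum of the interior constant from Theorem~\ref{t1} and the boundary constant $C$ just produced. This completes the argument.
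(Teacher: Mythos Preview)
Your argument is correct and follows essentially the same route as the paper's proof: both bound the gradient $n\,\beta\cdot\langle\alpha^{n-1}\rangle$ uniformly for $\alpha$ in a bounded neighborhood of $\partial\mathcal{B}_X$ via monotonicity of the positive product, then pass to the boundary by an $\ve$-approximation and the continuity of $\vol$ (the paper restricts the segment from the boundary point to $t\in[\ve,1]$, while you shift both endpoints by $\ve\omega$). One cosmetic caveat: the intermediate inequality $|\beta\cdot\langle\alpha^{n-1}\rangle|\le C_0^{n-1}\,|\beta\cdot\omega^{n-1}|$ is not literally valid for arbitrary $\beta$ (it can fail when $\beta\cdot\omega^{n-1}=0$), but the intended bound $\le C\|\beta\|$ follows immediately by writing $\beta$ as a difference of K\"ahler classes and using that $\langle\alpha^{n-1}\rangle$ is a positive $(n-1,n-1)$-class dominated by $C_0^{n-1}\omega^{n-1}$.
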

\begin{rmk}It would be interesting to prove this result for all compact K\"ahler manifolds without appealing to the BDPP Conjecture.\end{rmk}

There has also been much interest recently in the optimal regularity of the restriction of $\vol$ to segments of the form $\alpha+t\omega$ where $\alpha\in\ov{\mathcal{B}_X}$ and $\omega$ is a K\"ahler class, see e.g. \cite{Leh, McC, Ec, Les, FLT}. In particular, in \cite{FLT} it is proved that if $X$ is projective (or compact K\"ahler satisfying BDPP) and $\alpha\in\de\mathcal{B}_X$, then \begin{equation}\label{voll}
t\mapsto \vol(\alpha+t\omega),\quad t\in [0,1],
\end{equation}
is always $C^1$ differentiable on $[0,1]$, but there are examples where it is not $C^{1,\gamma}$ on $[0,\ve)$ for any $\gamma,\ve>0$. Our next result shows that the same holds ``with one more derivative'' when $\alpha\in\mathcal{B}_X$:

\begin{theorem}\label{irr}
Let $X$ be a projective manifold, with $\alpha\in\mathcal{B}_X$ and $\omega$ K\"ahler. Then the function in \eqref{voll} is $C^{1,1}$ differentiable on $[0,1]$ but there are examples where it is not $C^{2,\gamma}$ on $[0,\ve)$ for any $\gamma,\ve>0$.
\end{theorem}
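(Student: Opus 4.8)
The positive part I would obtain immediately from Theorem~\ref{t1}. Since the big cone is convex and stable under adding nef (in particular K\"ahler) classes, the class $\alpha+t\omega$ stays in $\mathcal{B}_X$ for every $t\in[0,1]$ (indeed for all $t\ge 0$), so the whole segment is a compact subset of $\mathcal{B}_X$. The restriction to it of the function $\vol\in C^{1,1}_{\rm loc}(\mathcal{B}_X)$ is then $C^{1,1}$ on $[0,1]$, which is all the positive part requires.

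For the negative part the plan is to manufacture an interior example by ``adding one derivative'' to the boundary example of \cite{FLT}. I would start from a projective manifold $Y$ of dimension $n-1$, a class $\beta\in\partial\mathcal{B}_Y$, and a rational ample class $\eta$ for which $g(s):=\vol_Y(\beta+s\eta)$ is $C^1$ on $[0,1]$ but not $C^{1,\gamma}$ on $[0,\ve)$ for any $\gamma,\ve>0$; such data is exactly what \cite{FLT} provides (taking $\eta$ rational, or approximating the ample direction and checking the pathology survives). Two features of $g$ will be used: first, $g\equiv 0$ for $s\le 0$ and $g$ is $C^1$ at $0$, so $g'(0)=0$ and the non-Hölder behaviour of $g'$ is located precisely at $s=0$; second, $\beta+s\eta\in\mathcal{B}_Y$ for $s\in(0,1]$, so Theorem~\ref{t1} on $Y$ makes $g$ of class $C^{1,1}$ near $s=1$.

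Next I would pass to the $\mathbb{P}^1$-bundle $\pi\colon X=\mathbb{P}(\mathcal{O}_Y\oplus\mathcal{O}_Y(\eta))\to Y$, a projective $n$-fold, let $\xi$ be the relative hyperplane class with its standard normalization, and set $\omega:=\pi^*\eta+\ve\xi$, which is K\"ahler on $X$ for small $\ve>0$. The key input is the fibration formula for the volume,
\[
\vol_X\big(a\,\xi+\pi^*M\big)=n\int_0^{a}\vol_Y\big(M+u\eta\big)\,du\qquad(a>0),
\]
which I would establish for rational $M$ from the Okounkov-body description of $\vol$ along the fibration (equivalently from $\pi_*\mathcal{O}_X(m\xi)=\mathrm{Sym}^m(\mathcal{O}_Y\oplus\mathcal{O}_Y(\eta))$ and Riemann--Roch), and then extend to real $M$ by continuity of $\vol$. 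Taking $\alpha:=\xi+\pi^*\beta$ gives $\alpha+t\omega=(1+t\ve)\xi+\pi^*(\beta+t\eta)$, so the formula would yield
\[
f(t):=\vol_X(\alpha+t\omega)=n\int_0^{1+t\ve}g(t+u)\,du=n\int_{t}^{\,t+1+t\ve}g(v)\,dv.
\]
In particular $f(0)=n\int_0^1 g>0$, so $\alpha\in\mathcal{B}_X$ as required, and the lower endpoint of the interval of integration sweeps through the singular value $v=0$ of $g$ exactly at $t=0$.

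Differentiating twice would give $f''(t)=n(1+\ve)^2\,g'\!\big(t+1+t\ve\big)-n\,g'(t)$. Near $t=0^+$ the first term is Lipschitz, because $g$ is $C^{1,1}$ near $v=1$, whereas the second term $-n\,g'(t)$ carries the modulus of continuity of $g'$ at $0$, which is worse than every Hölder modulus; hence $f''$ is bounded (matching the $C^{1,1}$ bound from the first part, and in fact making $f$ of class $C^2$) but is not $\gamma$-Hölder at $t=0$ for any $\gamma>0$, so $f$ is not $C^{2,\gamma}$ on $[0,\ve)$ for any $\gamma,\ve>0$. The main obstacle I anticipate is proving the fibration formula and justifying its use for the \emph{real} (possibly transcendental) class $M=\beta$: this needs either the Okounkov-body fibration for rational classes followed by a continuity argument, or a direct treatment through Boucksom's transcendental volume. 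A secondary technical point is to guarantee that the ample direction $\eta$ in the \cite{FLT} example can be chosen rational, so that $\mathcal{O}_Y(\eta)$ is a genuine line bundle; the remaining ingredients---the convexity in the first part, the perturbation $\ve\xi$ making $\omega$ K\"ahler, and differentiation under the integral sign---are routine.
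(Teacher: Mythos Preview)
Your approach is essentially the same as the paper's: the $C^{1,1}$ part comes directly from Theorem~\ref{t1}, and the counterexample is the $\mathbb{P}^1$-bundle $X=\mathbb{P}_B(\mathcal{O}_B\oplus A)$ over the \cite{FLT} threefold, with $\vol_X$ expressed via the integral fibration formula and then differentiated so that the boundary term at the lower limit reproduces the non-$C^{1,\gamma}$ function $\vol_B(D+tA)$. The paper sidesteps both obstacles you flag by taking $\omega=\xi+\pi^*A$ (ample by \cite[Lemma~2.3.2]{Laz}, so no $\varepsilon$ is needed), by using the integral ample $A$ already present in \cite{FLT} so no rationality issue arises, and by citing Wolfe's thesis \cite[Theorem~6.3]{Wo} for the fibration formula, stated there for arbitrary $\mathbb{R}$-divisors.
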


\begin{rmk}\label{meno}
In contrast to the result in Theorem \ref{irr}, Rob Lazarsfeld conjectures that given $\alpha\in\mathcal{B}_X$ and $\omega$ K\"ahler, the function
\begin{equation}\label{voll2}
t\mapsto \vol(\alpha-t\omega),\quad t\in [0,1],
\end{equation}
should be smooth (or even real analytic) on $[0,\ve)$ for some small $\ve>0$. In the examples in Theorem \ref{irr}, the regularity of \eqref{voll2} is unclear, and it may well be smooth on $[0,\ve)$.
\end{rmk}

\begin{rmk}
In \cite[Conjecture 2.18]{ELMNP3} and \cite[Remark 2.2.52]{Laz}, it is conjectured that roughly speaking there should exists a wall-chamber decomposition of $\mathcal{B}_X$ (with walls allowed to accumulate) such that $\vol$ is $C^\infty$ (or even real analytic) on each open chamber. This is true on surfaces \cite{BKS} and on toric manifolds \cite{HKP}. Our proof of Theorem \ref{t1} shows that $\vol$ is $3$ times differentiable Lebesgue a.e. on $\mathcal{B}_X$, see Remark \ref{thrice}. It would be very interesting to improve this to infinite differentiability Lebesgue a.e. on $\mathcal{B}_X$.
\end{rmk}

\begin{rmk}
In \cite{LM}, Lazarsfeld-Musta\c{t}\u{a} associate a volume function to an arbitrary multi-graded linear series, which enjoys many of the same formal properties as $\vol$.
In \cite{KLM}, K\"uronya-Lozovanu-Maclean showed that every continuous, $n$-homogeneous, log-concave function can arise as the volume function of some multi-graded linear series, and in particular there are examples of such volume functions which are nowhere $3$ times differentiable. Indeed, our results rely crucially on the $C^1$ differentiability of the volume function and on the formula \eqref{diff} for its gradient, and thus no analogous property can hold for general multi-graded linear series.
\end{rmk}
The paper is organized as follows. In section \ref{s2} we prove Theorems \ref{t1} and \ref{t2}. We give two different proofs of Theorem \ref{t1}, and both rely crucially on the formula \eqref{diff} for the gradient of $\vol$. In section \ref{s3} we give the proof of Theorem \ref{irr}, by taking a $\mathbb{P}^1$-bundle over the Calabi-Yau $3$-fold considered in \cite{FLT}.

\subsection*{Acknowledgments} We are grateful to Guido De Philippis, Simion Filip, Mattias Jonsson, Rob Lazarsfeld, John Lesieutre and Jian Xiao for many useful discussions, and to the referee for very useful comments. The second-named author was partially supported by NSF grant DMS-2404599.

\section{Optimal $C^{1,1}$ regularity}\label{s2}
In this section we give two proofs of Theorem \ref{t1}. For the first proof we will use the following result, which is well-known to the experts, cf. \cite[Theorem 2.16]{BFJ} in the projective case:
\begin{proposition}\label{conv}
Let $(X^n,\omega)$ be a compact K\"ahler manifold.
The function $\mathcal{B}_X\to\mathbb{R}_{>0}$ given by
$$\alpha\mapsto\left(\omega\cdot\langle\alpha^{n-1}\rangle\right)^{\frac{1}{n-1}}$$
is concave.
\end{proposition}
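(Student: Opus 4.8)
The plan is to reduce the asserted concavity to an inequality of Brunn--Minkowski type by exploiting homogeneity. Write $m=n-1$, set $f(\alpha):=\omega\cdot\langle\alpha^{m}\rangle$, and let $g:=f^{1/m}$. Since the positive product is homogeneous of degree $m$ in $\alpha$, the function $f$ is homogeneous of degree $m$ and $g$ is positively $1$-homogeneous on the convex cone $\mathcal{B}_X$. For a positively $1$-homogeneous function, concavity is equivalent to superadditivity (if $g$ is superadditive then $g((1-t)\alpha_0+t\alpha_1)\ge g((1-t)\alpha_0)+g(t\alpha_1)=(1-t)g(\alpha_0)+tg(\alpha_1)$). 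Hence it suffices to prove
\[
\left(\omega\cdot\langle(\alpha_0+\alpha_1)^m\rangle\right)^{1/m}\ge\left(\omega\cdot\langle\alpha_0^m\rangle\right)^{1/m}+\left(\omega\cdot\langle\alpha_1^m\rangle\right)^{1/m}
\]
for all $\alpha_0,\alpha_1\in\mathcal{B}_X$.

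First I would invoke the superadditivity of the positive product in each slot from Boucksom's theory. Applying it $m$ times to peel off the factors of $\alpha_0+\alpha_1$, and using the symmetry of positive products, the class
\[
\langle(\alpha_0+\alpha_1)^m\rangle-\sum_{k=0}^m\binom{m}{k}\langle\alpha_0^{m-k}\cdot\alpha_1^k\rangle
\]
is pseudoeffective. Since $\omega$ is K\"ahler, pairing a pseudoeffective $(m,m)$-class against $\omega$ is nonnegative, so writing $b_k:=\omega\cdot\langle\alpha_0^{m-k}\cdot\alpha_1^k\rangle$ we obtain $\omega\cdot\langle(\alpha_0+\alpha_1)^m\rangle\ge\sum_{k=0}^m\binom{m}{k}b_k$, where $b_0=f(\alpha_0)>0$ and $b_m=f(\alpha_1)>0$.

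Next I would bring in the Khovanskii--Teissier inequalities for positive products, which give the log-concavity $b_k^2\ge b_{k-1}b_{k+1}$ of the mixed masses. Since the endpoints $b_0,b_m$ are positive, log-concavity forces every $b_k>0$, and the concavity of $k\mapsto\log b_k$ means it lies above the chord joining its endpoints, i.e. $b_k\ge b_0^{(m-k)/m}b_m^{k/m}$. Summing against the binomial coefficients and applying the binomial theorem,
\[
\sum_{k=0}^m\binom{m}{k}b_k\ge\sum_{k=0}^m\binom{m}{k}\bigl(b_0^{1/m}\bigr)^{m-k}\bigl(b_m^{1/m}\bigr)^{k}=\bigl(b_0^{1/m}+b_m^{1/m}\bigr)^m.
\]
Combining this with the previous step and taking $m$-th roots yields exactly the superadditivity displayed above, which proves the proposition.

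The hard part is the log-concavity $b_k^2\ge b_{k-1}b_{k+1}$, i.e. the Khovanskii--Teissier inequalities for the positive product; everything else is the elementary Brunn--Minkowski manipulation together with the formal superadditivity of positive products. In the nef case this log-concavity is the classical consequence of the Hodge--Riemann bilinear relations for K\"ahler classes, and the general big case follows from Boucksom's construction of positive products via modifications and Fujita/Demailly approximation, which reduces the mixed masses to honest intersection numbers of K\"ahler (or nef and big) classes upstairs. This is the transcendental analogue of \cite[Theorem 2.16]{BFJ}; once it and superadditivity are granted, the argument above completes the proof.
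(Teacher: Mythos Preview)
Your argument is correct and rests on the same two pillars as the paper's proof: Khovanskii--Teissier inequalities and Fujita-type approximation. The organization differs slightly. You package the argument abstractly, invoking superadditivity of the positive product and log-concavity $b_k^2\ge b_{k-1}b_{k+1}$ for the mixed masses $b_k=\omega\cdot\langle\alpha_0^{m-k}\cdot\alpha_1^k\rangle$ as known properties, with Fujita approximation hidden inside the proof of the latter. The paper instead does a single explicit Fujita approximation up front: it passes to a common modification where $\alpha_0,\alpha_1$ are replaced by nef classes $A_{\alpha,\ve},A_{\beta,\ve}$, applies the classical Khovanskii--Teissier inequality $(A_{\alpha,\ve}^i\cdot A_{\beta,\ve}^{m-i}\cdot\mu_\ve^*\omega)\ge (A_{\alpha,\ve}^{m}\cdot\mu_\ve^*\omega)^{i/m}(A_{\beta,\ve}^{m}\cdot\mu_\ve^*\omega)^{(m-i)/m}$ for nef classes directly, and lets $\ve\to 0$. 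This bypasses any need for superadditivity of positive products or for a separately stated Khovanskii--Teissier inequality for big classes. Your modular route is cleaner to read if one grants those two black boxes; the paper's route is more self-contained and makes the dependence on Fujita approximation and the classical Hodge-index inequality fully explicit.
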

\begin{proof}
Since $\omega$ is K\"ahler, \cite[Lemma 3.1]{Xi} (see also \cite[p.219]{BEGZ}) gives us that
$$\omega\cdot\langle\alpha^{n-1}\rangle=\langle\alpha^{n-1}\cdot\omega\rangle.$$
Thus, given $\alpha,\beta\in\mathcal{B}_X$ and $t\in [0,1]$, we want to show that
\begin{equation}\label{gln}
\langle(t\alpha+(1-t)\beta)^{n-1}\cdot\omega\rangle\geq  \left(t\langle \alpha^{n-1}\cdot \omega\rangle^{\frac{1}{n-1}}+(1-t)\langle\beta^{n-1}\cdot \omega\rangle^{\frac{1}{n-1}}\right)^{n-1}.
\end{equation}
To prove this, we use same Fujita approximation method as in \cite[Theorem 5.4]{CT2} (taking $V=X$ there), applied to both classes $\alpha$ and $\beta$. Given $\ve>0$, this gives us two modifications
$\mu_{\alpha,\ve}:X_{\alpha,\ve}\to X$ and $\mu_{\beta,\ve}:X_{\beta,\ve}\to X$, where $X_{\alpha,\ve},X_{\beta,\ve}$ are compact K\"ahler manifolds, such that we can write
$$\mu_{\alpha,\ve}^*\alpha=A_{\alpha,\ve}+E_{\alpha,\ve},\quad \mu_{\beta,\ve}^*\beta=A_{\beta,\ve}+E_{\beta,\ve}$$
where $A_{\alpha,\ve},A_{\beta,\ve}$ are nef classes and $E_{\alpha,\ve},E_{\beta,\ve}$ are effective $\mathbb{R}$-divisors, such that
\begin{equation}\label{gl1}
\left|\langle\alpha^{n-1}\cdot\omega\rangle-(A_{\alpha,\ve}^{n-1}\cdot\mu_{\alpha,\ve}^*\omega)\right|\to 0,
\quad \left|\langle\beta^{n-1}\cdot\omega\rangle-(A_{\beta,\ve}^{n-1}\cdot\mu_{\beta,\ve}^*\omega)\right|\to 0,\quad \ve\downarrow 0.
\end{equation}
We can then take a modification $\mu_\ve:X_\ve\to X$ which dominates both $\mu_{\alpha,\ve}$ and $\mu_{\beta,\ve}$, pull back everything to $X_\ve$ without changing notation, so that
$\mu_\ve^*\alpha=A_{\alpha,\ve}+E_{\alpha,\ve}$ and so on. A basic property of positive products \cite[Thm 1.16]{BEGZ} then gives that
\begin{equation}\label{gl2}
\langle(t\alpha+(1-t)\beta)^{n-1}\cdot\omega\rangle=\langle(t\mu_\ve^*\alpha+(1-t)\mu_\ve^*\beta)^{n-1}\cdot\mu_\ve^*\omega\rangle\geq ((tA_{\alpha,\ve}+(1-t)A_{\beta,\ve})^{n-1}\cdot \mu_\ve^*\omega),
\end{equation}
and this can be expanded as
$$\left((tA_{\alpha,\ve}+(1-t)A_{\beta,\ve})^{n-1}\cdot \mu_\ve^*\omega\right)=\sum_{i=0}^{n-1}\binom{n-1}{i}t^i(1-t)^{n-i-1}(A_{\alpha,\ve}^i\cdot A_{\beta,\ve}^{n-i-1}\cdot \mu_\ve^*\omega),$$
and using here the Khovanskii-Teissier inequalities (cf. \cite[Proposition 5.2]{De}, \cite[Variant 1.6.2]{Laz}, \cite[Theorem 1.6.C1]{Gr})
$$(A_{\alpha,\ve}^i\cdot A_{\beta,\ve}^{n-i-1}\cdot \mu_\ve^*\omega)\geq (A_{\alpha,\ve}^{n-1}\cdot \mu_\ve^*\omega)^{\frac{i}{n-1}}(A_{\beta,\ve}^{n-1}\cdot \mu_\ve^*\omega)^{\frac{n-i-1}{n-1}},$$
we get
\[\begin{split}
&\left((tA_{\alpha,\ve}+(1-t)A_{\beta,\ve})^{n-1}\cdot \mu_\ve^*\omega\right)\\
&\geq \sum_{i=0}^{n-1}\binom{n-1}{i}t^i(1-t)^{n-i-1}(A_{\alpha,\ve}^{n-1}\cdot \mu_\ve^*\omega)^{\frac{i}{n-1}}(A_{\beta,\ve}^{n-1}\cdot \mu_\ve^*\omega)^{\frac{n-i-1}{n-1}}\\
&=\left(t(A_{\alpha,\ve}^{n-1}\cdot \mu_\ve^*\omega)^{\frac{1}{n-1}}+(1-t)((A_{\beta,\ve}^{n-1}\cdot \mu_\ve^*\omega)^{\frac{1}{n-1}}\right)^{n-1},
\end{split}\]
and combining this with \eqref{gl1} and \eqref{gl2}, and letting $\ve\downarrow 0$, proves the concavity statement in \eqref{gln}.
\end{proof}

\begin{proof}[First proof of Theorem \ref{t1}]
Fix a basis $\omega_1,\dots,\omega_N$ of $H^{1,1}(X,\mathbb{R})$ which consists of K\"ahler classes. Thanks to \eqref{diff}, the gradient of $\vol$ at a big class $\alpha\in\mathcal{B}_X$ is expressed with respect to this basis as
$$\nabla\vol\bigg|_{\alpha}=(n\omega_1\cdot\langle\alpha^{n-1}\rangle,\dots,n\omega_N\cdot\langle\alpha^{n-1}\rangle).$$
Thus, to show that $\vol$ is $C^{1,1}$ differentiable in $\mathcal{B}_X$, it suffices to show that given a K\"ahler class $\omega$ the function $\mathcal{B}_X\to\mathbb{R}_{>0}$ given by
\begin{equation}\label{f}
f(\alpha):=\omega\cdot \langle\alpha^{n-1}\rangle\end{equation}
is locally Lipschitz. Now, thanks to Proposition \ref{conv}, the (strictly positive) function $f^{\frac{1}{n-1}}$ is concave on $\mathcal{B}_X$. Since a concave function on an open subset of Euclidean space is locally Lipschitz, we get that $f^{\frac{1}{n-1}}$ is locally Lipschitz on $\mathcal{B}_X$, hence so is $f$ itself.
\end{proof}

\begin{rmk}\label{thrice}
Thanks to a classical theorem of Alexandrov, a concave function on an open subset of Euclidean space is twice differentiable Lebesgue a.e.
Thus, the function  $f^{\frac{1}{n-1}}$ is twice differentiable a.e. on $\mathcal{B}_X$, hence so is $f$ itself. This means that $\vol$ is three times differentiable a.e. on $\mathcal{B}_X$.
\end{rmk}

The second proof of Theorem \ref{t1} is more elementary and does not require the concavity property in Proposition \ref{conv}. We find it convenient to give an abstract formulation of the main ingredient used, so we let $V$ be a finite-dimensional real vector space, equipped with some Euclidean norm, with a nonempty open convex cone $\mathcal{B}\subset V$ and a function
\begin{equation}f:\mathcal{B}\to\mathbb{R}_{\geq 0}.\end{equation}

The following elementary lemma is based on the proof of \cite[Theorem 5.2 (a)]{ELMNP}, see also \cite[Proposition 3.1]{ELMNP2} for a related but different result.
\begin{proposition}\label{elem}
Suppose $f$ satisfies
\begin{itemize}
\item[(a)] $f$ is homogeneous of degree $d\geq 1$, i.e. $f(\lambda\alpha)=\lambda^d f(\alpha),$ for all $\alpha\in\mathcal{B}$ and $\lambda\in\mathbb{R}_{>0}$,
\item[(b)] $f$ is non-decreasing in the $\mathcal{B}$ directions, in the sense that for every $\alpha,\beta\in\mathcal{B}$ we have $f(\alpha+\beta)\geq f(\alpha)$,
\item[(c)] $f$ is locally bounded.
\end{itemize}
Then $f$ is locally Lipschitz in $\mathcal{B}$.
\end{proposition}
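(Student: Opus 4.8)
The plan is to prove Proposition \ref{elem} by leveraging the three hypotheses to extract a local Lipschitz bound on $f$ directly from a local bound on $|f|$, using the open convex cone structure to control how $f$ changes between two nearby points $\alpha, \beta \in \mathcal{B}$.

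First I would fix a point $\alpha_0 \in \mathcal{B}$ and work in a small neighborhood. Since $\mathcal{B}$ is an open cone, there exist constants so that a small Euclidean ball $U$ around $\alpha_0$ is contained in $\mathcal{B}$, and moreover — crucially — one can find a fixed element $\gamma \in \mathcal{B}$ (for instance a small multiple of $\alpha_0$ itself) and a radius $r > 0$ such that for any two points $\alpha, \beta \in U$ within distance $\delta$ of each other, the differences $\beta - \alpha + \gamma$ and $\alpha - \beta + \gamma$ both lie in $\mathcal{B}$, provided $\delta$ is small relative to how far $\gamma$ pokes into the interior of the cone. This is the geometric heart of the argument: the openness of $\mathcal{B}$ means there is room to absorb a displacement of size $\delta$ into a fixed cone direction $\gamma$ at the cost of a controlled constant.

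The key estimate then comes from combining monotonicity (b) with homogeneity (a). Writing $\beta = \alpha + (\beta - \alpha)$ and using that $\beta - \alpha + \gamma \in \mathcal{B}$, I would apply hypothesis (b) to get $f(\beta + \gamma) = f(\alpha + (\beta - \alpha + \gamma)) \geq f(\alpha)$, and symmetrically $f(\alpha + \gamma) \geq f(\beta)$. More usefully, I expect to scale $\gamma$ by a factor proportional to the distance $\|\beta - \alpha\|$: choosing the cone direction to have magnitude $\sim \|\beta - \alpha\|$, monotonicity yields $f(\beta) \leq f(\alpha + c\|\beta-\alpha\|\gamma)$ for an appropriate fixed $\gamma$ and constant $c$. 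Now I would estimate the right-hand side using homogeneity. Since $\alpha + c\|\beta - \alpha\|\gamma$ and $\alpha$ differ by a controlled scaling, writing $\alpha + c\|\beta-\alpha\|\gamma = (1 + c'\|\beta-\alpha\|)\left(\tfrac{1}{1+c'\|\beta-\alpha\|}\alpha + \cdots\right)$ and applying (a) with the local bound (c) on $f$, one extracts
\[
f(\beta) - f(\alpha) \leq C \|\beta - \alpha\|,
\]
where $C$ depends on the local sup of $|f|$ near $\alpha_0$, on $d$, and on the geometry of $\gamma$ inside $\mathcal{B}$. Reversing the roles of $\alpha$ and $\beta$ gives the matching lower bound, and together these establish the local Lipschitz estimate.

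The main obstacle, and the step requiring the most care, is arranging the interplay between the three hypotheses quantitatively: one must scale the fixed cone direction $\gamma$ by an amount proportional to $\|\beta - \alpha\|$ so that the monotonicity comparison point stays inside $\mathcal{B}$ and within the region where $f$ is bounded, while simultaneously ensuring the homogeneity-plus-boundedness step produces a bound linear (rather than merely Hölder) in $\|\beta - \alpha\|$. Getting a genuinely linear estimate requires expanding $f$ at the scaled comparison point via homogeneity and checking that the first-order term in $\|\beta - \alpha\|$ has a coefficient bounded by the local sup of $|f|$; the higher-order terms in the expansion are harmless for small $\delta$. Once the degree-$d$ homogeneity is used to linearize $\lambda^d = 1 + d(\lambda - 1) + O((\lambda-1)^2)$ near $\lambda = 1$, everything reduces to the local bound (c), so I expect no further difficulty. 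I would also take care that the constant $C$ is uniform on a neighborhood of $\alpha_0$, which follows since all the geometric data ($\gamma$, $r$, and the local sup of $|f|$) can be chosen uniformly on a slightly smaller ball.
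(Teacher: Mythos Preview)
Your overall strategy matches the paper's: use monotonicity (b) to trap $f(\beta)$ between values of $f$ at points displaced from $\alpha$ in a cone direction scaled by $\|\beta-\alpha\|$, then invoke homogeneity (a) to linearize. However, there is a genuine gap at the step where you apply homogeneity. You propose to write $\alpha + c\|\beta-\alpha\|\gamma = (1+c'\|\beta-\alpha\|)(\cdots)$ and bound $f$ at the inner point using the local supremum $M$ from (c). But this only yields
\[
f(\beta)\leq (1+c'\|\beta-\alpha\|)^d\, M \leq M + O(\|\beta-\alpha\|),
\]
and since $f(\alpha)$ may lie anywhere in $[0,M]$, subtracting gives $f(\beta)-f(\alpha)\leq (M-f(\alpha)) + O(\|\beta-\alpha\|)$, which is \emph{not} $O(\|\beta-\alpha\|)$. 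Using (c) at the rescaled point loses exactly the information you need.

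The missing ingredient---and this is precisely what the paper supplies---is to make the comparison point a scalar multiple of $\alpha$ \emph{itself}, so that homogeneity produces $(1+O(\|\beta-\alpha\|))^d f(\alpha)$ rather than $(1+O(\|\beta-\alpha\|))^d M$. The paper achieves this by a two-step chain: for $h\in\mathcal{B}$ with $\|h\|<\ve$ it first uses monotonicity to pass from $\alpha+h$ to $\alpha+\tfrac{\alpha_0}{2}\tfrac{\|h\|}{\ve}$ (essentially your step with $\gamma=\tfrac{\alpha_0}{2}$), and then applies monotonicity a \emph{second} time to pass to $\alpha+\alpha\tfrac{\|h\|}{\ve}$, using that $\alpha-\tfrac{\alpha_0}{2}\in B_\ve(\tfrac{\alpha_0}{2})\subset\mathcal{B}$ for $\alpha$ near $\alpha_0$. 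Now homogeneity gives $f(\alpha+h)\leq (1+\|h\|/\ve)^d f(\alpha)$, and subtracting $f(\alpha)$ yields the desired linear bound with constant $\tfrac{C_d}{\ve}\sup f$. Your proposal omits this second comparison, which is where the argument actually closes. (The paper then handles general displacements by telescoping along a basis $\{e_i\}\subset\mathcal{B}$, but that part is routine once the estimate for $h\in\mathcal{B}$ is in hand.)
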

\begin{proof}
We may assume without loss of generality that $0\not\in\mathcal{B}$, since otherwise we would have $\mathcal{B}=V$ and then $f$ would be constant by property (b).
Let $\ell=\dim V$ and fix $\{e_i\}_{i=1}^\ell$ in $\mathcal{B}$ which form a basis of $V$. We may assume without loss of generality that the chosen norm is given by
\begin{equation}\left\|\sum_{i=1}^\ell c_i e_i\right\|=\sum_{i=1}^\ell|c_i|,\end{equation}
and we will use $B_r(\alpha)\subset V$ to denote the Euclidean ball of radius $r$ and center $\alpha\in\mathcal{B}$.
Given $\alpha_0\in\mathcal{B}$, choose $\ve>0$ sufficiently small so that $U:=B_\ve(\alpha_0)\subset \mathcal{B}$, and $B_\ve\left(\frac{\alpha_0}{2}\right)\subset \mathcal{B}$, and $\sup_U f<\infty$. In particular, given any $h\in\mathcal{B}$ we have
\begin{equation}
\frac{\alpha_0}{2}-\ve\frac{h}{\|h\|}\in  B_\ve\left(\frac{\alpha_0}{2}\right)\subset \mathcal{B},
\end{equation}
and hence
\begin{equation}\label{q1}
\frac{\alpha_0}{2}\frac{\|h\|}{\ve}-h\in \mathcal{B}.\end{equation}
Given any $\alpha\in U$ we have
\begin{equation}\alpha-\frac{\alpha_0}{2}=\frac{\alpha_0}{2}+(\alpha-\alpha_0)\in B_\ve\left(\frac{\alpha_0}{2}\right)\subset \mathcal{B},\end{equation}
hence
\begin{equation}\label{q2}
\alpha\frac{\|h\|}{\ve}-\frac{\alpha_0}{2}\frac{\|h\|}{\ve}\in\mathcal{B}.\end{equation}
If we assume that $h$ satisfies $\|h\|<\ve,$ then we clearly have
\begin{equation}\label{q3}
\alpha-\alpha\frac{\|h\|}{\ve}=\alpha\left(1-\frac{\|h\|}{\ve}\right)\in\mathcal{B},
\end{equation}
hence using \eqref{q1}, \eqref{q2} and \eqref{q3} we see that
\begin{equation}
\alpha-\frac{\alpha_0}{2}\frac{\|h\|}{\ve}=\left(\alpha-\alpha\frac{\|h\|}{\ve}\right)+\left(\alpha\frac{\|h\|}{\ve}-\frac{\alpha_0}{2}\frac{\|h\|}{\ve}\right)\in\mathcal{B},
\end{equation}
\begin{equation}
\alpha-h=\left(\alpha-\frac{\alpha_0}{2}\frac{\|h\|}{\ve}\right)+\left(\frac{\alpha_0}{2}\frac{\|h\|}{\ve}-h\right)\in\mathcal{B},
\end{equation}
so by property (b) we have
\begin{equation}\label{1}
f(\alpha)\geq f(\alpha-h)\geq f\left(\alpha-\frac{\alpha_0}{2}\frac{\|h\|}{\ve}\right)\geq f\left(\alpha-\alpha\frac{\|h\|}{\ve}\right),
\end{equation}
and property (a) gives
\begin{equation}\label{3}
f\left(\alpha-\alpha\frac{\|h\|}{\ve}\right)=\left(1-\frac{\|h\|}{\ve}\right)^d f(\alpha)\geq \left(1-d\frac{\|h\|}{\ve}\right)f(\alpha),
\end{equation}
so that combining \eqref{1} and \eqref{3} gives that for every $\alpha\in U$ and $h\in\mathcal{B}$ with $\|h\|<\ve$ we have
\begin{equation}\label{3a}
f(\alpha)\geq f(\alpha-h)\geq \left(1-d\frac{\|h\|}{\ve}\right)f(\alpha).
\end{equation}
Similarly, we have
\begin{equation}\label{4}\begin{split}
f(\alpha)&\leq f(\alpha+h)\leq f\left(\alpha+\frac{\alpha_0}{2}\frac{\|h\|}{\ve}\right)\leq f\left(\alpha+\alpha\frac{\|h\|}{\ve}\right)\\
&=\left(1+\frac{\|h\|}{\ve}\right)^d f(\alpha)\leq \left(1+2^d\frac{\|h\|}{\ve}\right)f(\alpha),
\end{split}
\end{equation}
and from \eqref{3a} and \eqref{4} we get
\begin{equation}\label{5}
|f(\alpha)-f(\alpha\pm h)|\leq \frac{C_d}{\ve}\|h\|f(\alpha),
\end{equation}
for every $\alpha\in U$ and $h\in\mathcal{B}$ with $\|h\|<\ve$. Given now any $\alpha_1,\alpha_2\in B_{\frac{\ve}{4}}(\alpha_0)\subset U$, write
\begin{equation}
\alpha_1-\alpha_2=\sum_{i=1}^\ell c_i e_i,
\end{equation}
so
\begin{equation}\label{6}
\sum_{i=1}^\ell |c_i|=\|\alpha_1-\alpha_2\|<\frac{\ve}{2}.
\end{equation}
Therefore, for any $1\leq j\leq \ell$, we have
\begin{equation}
\left\|\alpha_2+\sum_{i=1}^{j} c_ie_i-\alpha_0\right\|\leq \|\alpha_2-\alpha_0\|+\sum_{i=1}^{j}|c_i|<\frac{\ve}{4}+\frac{\ve}{2}<\ve,
\end{equation}
and so $\alpha_2+\sum_{i=1}^{j} c_ie_i\in U$. Using \eqref{5} and \eqref{6} we can then estimate
\begin{equation}\label{7}\begin{split}
|f(\alpha_1)-f(\alpha_2)|&=\left|f\left(\alpha_2+\sum_{i=1}^\ell c_ie_i\right)-f(\alpha_2)\right|\\
&\leq \sum_{j=1}^\ell \left|f\left(\alpha_2+\sum_{i=1}^j c_ie_i\right)-f\left(\alpha_2+\sum_{i=1}^{j-1} c_ie_i\right)\right|\\
&\leq \frac{C_d}{\ve}\sum_{j=1}^\ell |c_j|f\left(\alpha_2+\sum_{i=1}^{j-1} c_ie_i\right)\\
&\leq \frac{C_d}{\ve}\left(\sup_U f\right)\sum_{j=1}^\ell |c_j|\\
&= \frac{C_d}{\ve}\left(\sup_U f\right)\|\alpha_1-\alpha_2\|,
\end{split}\end{equation}
thus showing the desired Lipschitz bound for $f$ on $U$.
\end{proof}

\begin{proof}[Second proof of Theorem \ref{t1}]
We apply Proposition \ref{elem} with $V=H^{1,1}(X,\mathbb{R}),$ $\mathcal{B}=\mathcal{B}_X$. As explained in the discussion after \cite[Definition 1.17]{BEGZ}, the function $f:\mathcal{B}_X\to\mathbb{R}_{>0}$ defined in \eqref{f} satisfies properties (a), (b) and (c) of Proposition \ref{elem} because it is homogeneous of degree $n-1$, non-decreasing in the $\mathcal{B}_X$-directions, and continuous on $\mathcal{B}_X$. Proposition \ref{elem} thus shows that $f$ is locally Lipchitz in $\mathcal{B}_X$, and as explained earlier this shows that $\vol\in C^{1,1}_{\rm loc}(\mathcal{B}_X)$.
\end{proof}

To conclude this section, we give the proof of Theorem \ref{t2}. Observe that the locally Lipschitz property of $\vol$ at points in $\de\mathcal{B}_X$ is not obvious because while $\vol^{\frac{1}{n}}$ is concave on $\ov{\mathcal{B}_X}$, it is not concave on $H^{1,1}(X,\mathbb{R})$.
\begin{proof}[Proof of Theorem \ref{t2}]
Since $\vol$ is identically zero outside $\mathcal{B}_X$, and it is $C^{1,1}_{\rm loc}$ inside $\mathcal{B}_X$, it suffices to show that for every $\alpha\in \de\mathcal{B}_X$ and every $\beta\in H^{1,1}(X,\mathbb{R})$ close to $\alpha$ we have
\begin{equation}\label{todo}
\vol(\beta)=|\vol(\alpha)-\vol(\beta)|\leq C\|\alpha-\beta\|.
\end{equation}
Since \eqref{todo} is trivial when $\beta\not\in\mathcal{B}_X$, we assume that $\beta\in\mathcal{B}_X$. Then for every $\ve>0$ we have $\alpha+t(\beta-\alpha)\in\mathcal{B}_X, \ve\leq t\leq 1$. Now thanks to \cite{BFJ,LM,WN}, the volume is $C^1$ differentiable in $\mathcal{B}_X$ with
$$\left|\frac{d}{dt}\vol(\alpha+t(\beta-\alpha))\right|=n\left|\langle (\alpha+t(\beta-\alpha))^{n-1}\rangle\cdot (\beta-\alpha)\right|\leq n|\langle\beta^{n-1}\rangle \cdot (\beta-\alpha)|,$$
which is bounded above independent of $t\in [\ve,1]$ and also independent of $\ve>0$ small.
Hence, the compactness of $[\ve,1]$ implies that the Lipschitz constant of $\vol(\alpha+t(\beta-\alpha)), t\in [\ve,1],$ is uniformly bounded. Therefore,
$$|\vol(\alpha+\ve(\beta-\alpha))-\vol(\beta)|\leq C(1-\ve)\|\beta-\alpha\|,$$
with $C$ independent of $\ve$ and letting $\ve\to 0$ (using that $\vol$ is continuous on all of $H^{1,1}(X,\mathbb{R})$) we obtain \eqref{todo}.
\end{proof}

\section{Volume along segments}\label{s3}
In this section we prove Theorem \ref{irr}. Thanks to Theorem \ref{t1}, it suffices to prove:
\begin{proposition}\label{exo}
There exist a projective $4$-fold $X$ with a big class $\alpha\in N^1(X, \mathbb{R})$ and an ample class $\omega\in N^1(X)$ such that the function
$$t\mapsto \vol(\alpha+t\omega)$$
is not $C^{2,\gamma}$ on $[0,\ve)$ for any $\gamma,\ve>0$.
\end{proposition}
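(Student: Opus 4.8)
The strategy is to build an explicit example by taking a $\mathbb{P}^1$-bundle over a carefully chosen base, exploiting the fact that volume computations on projective bundles can be reduced to integrals involving the volume functions of the fibers and twisting data on the base. The paper signals this approach in the introduction: the plan is to take a $\mathbb{P}^1$-bundle over the Calabi--Yau $3$-fold $Y$ used in \cite{FLT}, where the volume function along an ample segment emanating from a boundary big class is known to fail $C^{1,\gamma}$ regularity for every $\gamma,\ve>0$. The goal is to transfer that irregularity ``up one derivative'' into an interior example on the $4$-fold $X=\mathbb{P}(E)\to Y$ for a suitable rank-$2$ bundle $E$ on $Y$.

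First I would set up the projectivization $\pi:X=\mathbb{P}(E)\to Y$, with relative hyperplane class $\xi=\mathcal{O}_{\mathbb{P}(E)}(1)$ and fiber class $F$, and choose $\alpha$ to be a combination of $\xi$, a pullback $\pi^*(\text{big class on }Y)$, and the relevant pullback data so that $\alpha\in\mathcal{B}_X$ is genuinely interior, while $\omega$ is an ample class on $X$ whose restriction to the base direction reproduces the ample twist $\omega_Y$ from the \cite{FLT} example. The key computational step is to express $\vol(\alpha+t\omega)$ on $X$ in terms of a one-parameter family of volumes (or restricted volumes / section-ring data) of the $t$-dependent classes on the base $Y$. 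For $\mathbb{P}^1$-bundles one expects a formula of the shape
\begin{equation}
\vol_X(\alpha+t\omega)=\frac{n}{??}\int \vol_Y\big(\,\cdot\,\big)\,,
\end{equation}
where the integration over the fiber direction raises the regularity by exactly one order: integrating a function that is $C^{1}$ but not $C^{1,\gamma}$ produces a function that is $C^{2}$ but not $C^{2,\gamma}$. Concretely, I would exhibit $\vol_X(\alpha+t\omega)$ as (a constant times) an integral $\int_{a(t)}^{b(t)} g(s)\,ds$ plus smooth terms, where $g$ is built from the \cite{FLT} base volume function, so that the second derivative in $t$ inherits precisely the $C^1$-but-not-$C^{1,\gamma}$ behavior of the base.

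The main obstacle will be establishing the reduction formula rigorously and making the fiber integration produce exactly the claimed one-order gain in regularity rather than more or less. One must be careful that the positive-product volume on $X$ genuinely splits as a base integral in the big (not merely nef) regime, which requires understanding the Zariski-type decomposition of $\alpha+t\omega$ on the total space and verifying that its restriction to fibers stays positive and behaves smoothly in $t$, so that only the base contribution carries the singular behavior. A secondary point is to confirm that $\alpha$ is strictly big on $X$ (interior to $\mathcal{B}_X$), since Theorem \ref{irr} is precisely about the interior case; this forces a judicious choice of the bundle $E$ and of the coefficients in $\alpha$ so that the relevant base class sits on $\de\mathcal{B}_Y$ (where the \cite{FLT} irregularity lives) while the total-space class is interior. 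Once the reduction and these positivity checks are in place, the non-$C^{2,\gamma}$ conclusion follows by differentiating the integral formula twice in $t$ and invoking the non-$C^{1,\gamma}$ property of the base volume function established in \cite{FLT}.
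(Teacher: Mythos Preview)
Your proposal is correct and matches the paper's approach essentially exactly: the paper takes $X=\mathbb{P}_B(\mathcal{O}_B\oplus A)$ over the \cite{FLT} base $B$, sets $\alpha=\xi+\pi^*D$ (big) and $\omega=\xi+\pi^*A$ (ample), and uses Wolfe's integral formula $\vol_X(\xi+\pi^*E)=n!\int_0^1\vol_B(E+sA)\,ds$ to reduce $\vol_X(\alpha+t\omega)$ to $n!\int_t^{1+2t}\vol_B(D+uA)\,du$, so that one differentiation in $t$ yields $-n!\vol_B(D+tA)$ plus a $C^{1,1}$ term, exactly the ``one order up'' transfer you anticipate. The reduction formula you flag as the main obstacle is precisely Wolfe's thesis result \cite[Theorem 6.3]{Wo}, and the bigness of $\alpha$ follows from \cite[Lemma 2.3.2]{Laz}.
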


\begin{proof}
Let $B$ be a general hypersurface in $(\mathbb{P}^1)^4$ of degree $(2,2,2,2)$, equipped with an ample line bundle $A$, and let $D$ be the $\mathbb{R}$-divisor on $B$ constructed in \cite{FLT} with the properties that $\vol(D)=0$ and
$$t\mapsto \vol(D+tA)$$
is not $C^{1,\gamma}$ on $[0,\ve)$ for any $\gamma,\ve>0$.
Consider
the projective bundle $\pi:X=\mathbb{P}_{B}(\mathcal{O}_B\oplus A)\to B$. Let $\xi=\mathcal{O}_{\mathbb{P}_{B}(\mathcal{O}_B\oplus A)}(1)$ be the Serre line bundle over $X$. Then thanks to \cite[Lemma 2.3.2]{Laz} we have that
$$\omega:=\xi+\pi^*A,$$
is ample on $X$, and
$$\alpha:=\xi+\pi^*D,$$
is big. For clarity, we will write $\vol_X,\vol_B$ for the volume functions on $X$ and $B$ respectively.

The PhD thesis of Wolfe shows \cite[Theorem 6.3]{Wo} that for any $\mathbb{R}$-divisor $E$ on $B$ we have
\begin{equation}\label{wolfe}
\vol_X(\xi+\pi^*E)=n!\int_{0}^1\vol_B(E+sA)ds,
\end{equation}
For $t>0$ we then have
\[\begin{split}
\vol_X(\alpha+t\omega)&=\vol_X(\xi+\pi^*D +t(\xi+\pi^*A))\\
&=(1+t)^{n+1}\vol_X\left(\xi+\pi^*\left(\frac{D}{1+t}+\frac{t}{1+t}A\right)\right)\\
&=(1+t)^{n+1}n!\int_0^1 \vol_B\left(\frac{D}{1+t}+\left(s+\frac{t}{1+t}\right)A\right)ds\\
&=(1+t)n!\int_{0}^1\vol_B(D+(s(1+t)+t)A)ds,
\end{split}\]
and using the change of variable $u=s(1+t)+t$ we get
\[\begin{split}
\vol_X(\alpha+t\omega)&=n!\int_{t}^{1+2t}\vol_B(D+uA)du.
\end{split}\]
Thus, for $t>0$,
$$\frac{d}{dt}\vol_X(\alpha+t\omega)=2n!\vol_B(D+(1+2t)A)-n!\vol_B(D+tA),$$
and for $t$ close to zero the first piece $\vol_B(D+(1+2t)A)$ is a $C^{1,1}$ function by Theorem \ref{t1} (since $D+A$ is big), while the second piece $\vol_B(D+tA)$ fails to be $C^{1,\gamma}$ for any $\gamma>0$ by \cite{FLT}, and so $\frac{d}{dt}\vol_X(\alpha+t\omega)$ is not $C^{1,\gamma}$  on $[0,\ve)$ for any $\gamma,\ve>0$, as desired.
\end{proof}

\begin{rmk}
Following up on Remark \ref{meno}, let us see what this argument can tell us about the behavior of $\vol(\alpha-t\omega)$, $t\in [0,\ve)$. As above, using Wolfe's formula, we compute
\[\begin{split}
\vol_X(\alpha-t\omega)&=\vol_X(\xi+\pi^*D -t(\xi+\pi^*A))\\
&=(1-t)^{n+1}\vol_X\left(\xi+\pi^*\left(\frac{D}{1-t}-\frac{t}{1-t}A\right)\right)\\
&=(1-t)^{n+1}n!\int_0^1 \vol_B\left(\frac{D}{1-t}+\left(s-\frac{t}{1-t}\right)A\right)ds\\
&=(1-t)n!\int_{\frac{t}{1-t}}^1\vol_B(D+(s(1-t)-t)A)ds,
\end{split}\]
using that $\vol_B(D)=0$ hence $\vol_B\left(\frac{D}{1-t}+\left(s-\frac{t}{1-t}\right)A\right)=0$ for $s\leq \frac{t}{1-t}$. Using the change of variable $u=s(1-t)-t$ we get
\[\begin{split}
\vol_X(\alpha-t\omega)&=n!\int_{0}^{1-2t}\vol_B(D+uA)du.
\end{split}\]
Thus, for $t>0$,
\begin{equation}\label{rr}
\frac{d}{dt}\vol_X(\alpha-t\omega)=-2n!\vol_B(D+A-2tA),
\end{equation}
and since $D+A$ is big on $B$, the RHS of \eqref{rr} is at least $C^{1,1}$ for $t\in [0,\ve)$, hence in this example $\vol_X(\alpha-t\omega)$ is at least $C^{2,1}$. Furthermore, if we take $A$ sufficiently ample, then $D+A$ will also be ample, and hence in this case $\vol_X(\alpha-t\omega)$ is a polynomial for $t\in [0,\ve)$. This is of course in line with Lazarsfeld's conjectural statement in Remark \ref{meno}, and in stark contrast with Proposition \ref{exo}.
\end{rmk}

\end{document}